\DeclareFontFamily{OMS}{rsfs}{\skewchar\font'60}
\DeclareFontShape{OMS}{rsfs}{m}{n}{<-5>rsfs5 <5-7>rsfs7 <7->rsfs10 }{}
\DeclareSymbolFont{rsfs}{OMS}{rsfs}{m}{n}
\DeclareSymbolFontAlphabet{\scr}{rsfs}
\DeclareSymbolFontAlphabet{\scr}{rsfs}
\setlist[itemize]{leftmargin=*}
\setlist[enumerate]{leftmargin=*}
\numberwithin{equation}{section} 
\newcommand\cO{{\mathcal O}}
\newcommand\bbP{{\mathbb P}}
\newcommand\bbQ{{\mathbb Q}}
\newcommand\bbR{{\mathbb R}}
\newcommand\bbZ{{\mathbb Z}}
\newtheorem{lemma1}{}[section]
\newenvironment{lemma}{\begin{lemma1}{\bf Lemma.}}{\end{lemma1}}
\newenvironment{thm}{\begin{lemma1}{\bf Theorem.}}{\end{lemma1}}
\newenvironment{cor}{\begin{lemma1}{\bf Corollary.}}{\end{lemma1}}
\newenvironment{defn}{\begin{lemma1}{\bf Definition.}}{\end{lemma1}}
\newenvironment{question}{\begin{lemma1}{\bf Question.}}{\end{lemma1}}
\newenvironment{thm A}{{\bf Theorem A.}}{}
\newenvironment{thm B}{{\bf Theorem B.}}{}
\newenvironment{thm C}{{\bf Theorem C.}}{}
\newenvironment{thm D}{{\bf Theorem D.}}{}
\newenvironment{remark*}{{\bf Remark.}}{}
\newenvironment{example*}{{\bf Example.}}{}
\newenvironment{assumption*}{{\bf Assumption.}}{}
\title{Note on quasi-polarized canonical Calabi-Yau threefolds} 
\date{\today}
\subjclass[2010]{14E05, 14J30, 14J32, 14J45}
\keywords{birationality, Calabi-Yau threefolds, Fano manifolds, freeness}
\author{Jie Liu}
\address{Jie Liu, Morningside Center of Mathematics, Academy of Mathematics and Systems Science, Chinese Academy of Sciences, Beijing, 100190, China}
\email{jliu@amss.ac.cn}
\begin{document}

\begin{abstract}
	Let $(X,L)$ be a quasi-polarized canonical Calabi-Yau threefold. In this note, we show that $\vert mL\vert$ is basepoint free for $m\geq 4$. Moreover, if the morphism $\Phi_{\vert 4L\vert}$ is not birational onto its image and $h^0(X,L)\geq 2$, then $L^3=1$. As an application, if $Y$ is a $n$-dimensional Fano manifold such that $-K_Y=(n-3)H$ for some ample divisor $H$, then $\vert mH\vert$ is basepoint free for $m\geq 4$ and if the morphism $\Phi_{\vert 4H\vert}$ is not birational onto its image, then $Y$ is either a weighted hypersurface of degree $10$ in the weighted projective space $\bbP(1,\cdots,1,2,5)$ or $h^0(Y,H)=n-2$.
\end{abstract}

\maketitle


\vspace{-0.2cm}

\section{Introduction}

A normal projective complex threefold $X$ is called a \emph{canonical Calabi-Yau threefold} if $\cO(K_X)\cong\cO_X$, $h^1(X,\cO_X)=0$ and $X$ has only canonical singularities. We say that $X$ is a \emph{minimal Calabi-Yau threefold}, if, in addition, $X$ has only $\bbQ$-factorial terminal singularities. A pair of a normal projective variety $X$ and a line bundle $L$ is called a \emph{polarized variety} if the line bundle $L$ is ample, and a \emph{quasi-polarized variety} if the line bundle $L$ is nef and big. For a given quasi-polarized canonical Calabi-Yau threefold $(X,L)$, the following questions naturally arise.

\begin{question}
	\begin{enumerate}
		\item When $\Phi_{\vert mL\vert}$ (the rational map defined by $\vert mL\vert$) is birational onto its image?
		
		\item When $\vert mL\vert$ is basepoint free?
	\end{enumerate}
\end{question} 

These two questions have already been investigated by several mathematicians in various different settings \cite{Jiang2016,Oguiso1991,OguisoPeternell1995} etc. Our first result can be viewed as a generalization of \cite[Theorem 1.1]{Oguiso1991} and \cite[Theorem 1]{OguisoPeternell1995}.

\begin{thm}\label{Free-threefolds}
	Let $(X,L)$ be a quasi-polarized canonical Calabi-Yau threefold. Then $\vert mL\vert$ is basepoint free when $m\geq 4$. Moreover, if $\Phi_{\vert 4L\vert}$ is not birational onto its image, then either $L^3=1$ or $h^0(X,L)=1$.
\end{thm}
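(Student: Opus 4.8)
The plan is to combine a surface-slicing argument with standard vanishing, after reducing to the polarized case. Throughout I use that $K_X\sim 0$, so Kawamata-Viehweg vanishing gives $H^i(X,mL)=0$ for all $i>0$ and $m\geq 1$, since $mL-K_X=mL$ is nef and big; hence $h^0(X,mL)=\chi(X,mL)=\tfrac16 m^3L^3+\tfrac1{12}mL\cdot c_2(X)$, and because $L\cdot c_2(X)\geq 0$ we obtain $h^0(X,L)\geq 1$, so $\vert L\vert\neq\emptyset$. Next, as $aL-K_X=aL$ is nef and big for every $a\geq 1$, the base-point-free theorem shows that $L$ is semiample; letting $\pi\colon X\to\bar X$ be the associated crepant birational contraction, we have $L=\pi^*A$ with $A$ ample on the Gorenstein canonical Calabi-Yau threefold $\bar X$, together with $A^3=L^3$, $h^0(\bar X,mA)=h^0(X,mL)$, and $\vert mL\vert=\pi^*\vert mA\vert$. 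Both assertions for $(X,L)$ therefore follow from those for $(\bar X,A)$, and I may assume $L$ is ample.

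For the basepoint-freeness I write $mL=K_X+mL$ and invoke the Fujita-type freeness theorem for threefolds (Ein-Lazarsfeld, Kawamata): for an ample Cartier divisor on a Gorenstein threefold with canonical singularities, $K_X+mL$ is basepoint free as soon as $m\geq\dim X+1=4$. This yields freeness of $\vert mL\vert$ for $m\geq 4$. An alternative, more self-contained route slices by a general surface $S\in\vert A\vert$, uses the surjection $H^0(X,4L)\to H^0(S,4L\vert_S)$ coming from $H^1(X,3L)=0$, and reduces to base-point-freeness of the $4$-canonical system on the minimal surface of general type $S$, where $K_S=L\vert_S$ by adjunction.

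For the birationality dichotomy I argue by contraposition: assuming $L^3\geq 2$ and $h^0(X,L)\geq 2$, I show $\Phi_{\vert 4L\vert}$ is birational onto its image. Fix a general $S\in\vert L\vert$; by adjunction $K_S=(K_X+S)\vert_S=L\vert_S$, so $S$ is a surface of general type with at worst canonical singularities and $K_S^2=L^3\geq 2$. By Bombieri's theorem on pluricanonical maps of surfaces of general type, $\Phi_{\vert 4K_S\vert}$ is birational onto its image precisely because $K_S^2\geq 2$, and since $H^1(X,3L)=0$ the restriction $H^0(X,4L)\to H^0(S,4K_S)$ is surjective, so $\Phi_{\vert 4L\vert}$ separates general points of $S$. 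To separate a general point $x$ from an arbitrary $y$ in its fiber, I use that the image of $H^0(X,L)\otimes H^0(X,3L)\to H^0(X,4L)$ forces $\Phi_{\vert 4L\vert}$ to dominate $\Phi_{\vert L\vert}$: if $\Phi_{\vert L\vert}(x)\neq\Phi_{\vert L\vert}(y)$ the points are already separated, whereas if $\Phi_{\vert L\vert}(x)=\Phi_{\vert L\vert}(y)$ then every member of $\vert L\vert$ through $x$ also passes through $y$, so both lie on the general surface $S$ and are separated there by $\Phi_{\vert 4K_S\vert}$. Hence $\Phi_{\vert 4L\vert}$ is generically injective.

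The main obstacle I expect lies in controlling the general surface section $S$: to run both arguments I must ensure that $S$ is irreducible, reduced, and has at worst canonical singularities, so that $K_S=L\vert_S$ is genuinely the canonical class of a surface of general type with $K_S^2=L^3$ and the surface pluricanonical results apply. When $h^0(X,L)$ is small or $\vert L\vert$ has base points, and especially when $\vert L\vert$ is composed with a pencil, this requires a delicate Bertini and adjunction analysis, and the sharpness of the freeness bound $m\geq 4$ in the Gorenstein canonical setting is precisely the subtle point. The two boundary cases $L^3=1$ and $h^0(X,L)=1$, where $S$ either violates Bombieri's numerical hypothesis or fails to move, are exactly where the method breaks down, in agreement with the extremal example $X_{10}\subset\bbP(1,1,1,2,5)$.
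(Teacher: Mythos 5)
Your reduction to the polarized case is exactly the paper's Lemma 2.1(2), and the overall shape (slice by a surface, apply surface pluricanonical results, use Kawamata--Viehweg to lift sections) is also the paper's. But the step you yourself flag as ``the main obstacle'' is not a technicality to be deferred --- it is the entire content of the theorem, and neither of your two routes closes it. For freeness: the Ein--Lazarsfeld/Kawamata Fujita-type theorem is proved for \emph{smooth} threefolds with $L$ ample; on a Gorenstein canonical $\bar X$ one would pass to a resolution $\pi\colon Y\to\bar X$, where $\pi^*A$ is only nef and big and has degree $0$ on $\pi$-exceptional curves, so the numerical criteria of that circle of results fail precisely along the exceptional locus, and $\pi^*(4A)=K_Y+4\pi^*A-E$ is not an adjoint bundle of the required form. (This is why the previously known bound in this setting was $m\geq 5$.) Your ``more self-contained'' route fails for the same reason you anticipate: a general member $S\in\vert A\vert$ need not be normal, minimal, or of general type with canonical singularities --- $\vert A\vert$ may have base points and may even consist of a single, badly singular member when $h^0=1$. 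The paper's fix is genuinely different input: Kawamata's existence of a \emph{semi-log canonical} member $S\in\vert H\vert$ with $K_S=H\vert_S$, combined with Liu--Rollenske's theorem that $\vert mK_S\vert$ is basepoint free for $m\geq 4$ on \emph{stable} (SLC) surfaces. That surface theorem is strictly stronger than Bombieri's classical results and is exactly where the improvement from $5$ to $4$ comes from; without it your argument does not produce the bound $m\geq 4$.

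The birationality dichotomy has the same gap plus a missing case. Bombieri's theorem needs $S$ to be a minimal surface of general type (smooth or canonical), which a general $S\in\vert L\vert$ need not be; and when $\vert L\vert$ is composed with a pencil ($\dim\Phi_{\vert L\vert}(X)=1$) your separation-of-points argument does not get started. The paper sidesteps both problems by never working with members of $\vert L\vert$ at all: after passing to a terminal ($\bbQ$-factorial, hence factorial) model and a resolution, it writes $\vert\pi^*L\vert=\vert F\vert+B$ (resp.\ $n\vert F\vert+B$ in the pencil case) with $\vert F\vert$ free, takes a general \emph{smooth} $T\in\vert F\vert$ --- for which $K_T$ is \emph{not} $L\vert_T$ --- and applies Reider's theorem to the adjoint system $K_T+m\pi^*L\vert_T$ with $\pi^*L\vert_T$ nef, following Oguiso--Peternell. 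Reider gives birationality for $m\geq 3$ once $(\pi^*L\vert_T)^2\geq 2$, and the failure case $(\pi^*L\vert_T)^2=1$ is converted into $L^3=1$ via $L^2\cdot\pi_*T=1$ and \cite[Lemma 1.1(4)]{OguisoPeternell1995}. So your proposal identifies the right destination but is missing the two key lemmas (SLC members plus stable-surface freeness; the Reider-on-the-free-part device) that make the argument actually go through.
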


The estimate is sharp as showed by a general weighted hypersurface of degree $10$ in the weighted projective space $\bbP(1,1,1,2,5)$. We remark also that we have always $h^0(X,L)\geq 1$ by \cite[Proposition 4.1]{Kawamata2000} and the morphism $\Phi_{\vert 5L\vert}$ is always birational onto its image by \cite[Theorem 1.7]{Jiang2016}. The basepoint freeness of $\vert 4H\vert$ is an easy consequence of \cite[Theorem 24]{LiuRollenske2014} and the existence of semi-log canonical member in $\vert H\vert$ (cf. \cite[Proposition 4.2]{Kawamata2000}), and for the second part of the theorem, our proof basically goes along the line of \cite[Theorem 1]{OguisoPeternell1995}. As the first application of Theorem \ref{Free-threefolds}, we generalize our previous result in \cite[Theorem 1.7]{Liu2017a}.

\begin{cor}\label{Weak-Fano-fourfolds}
	Let $X$ be a weak Fano fourfold with at worst Gorenstein canonical singularities. Then
	\begin{enumerate}
		\item the complete linear system $\vert -mK_X\vert$ is basepoint free for $m\geq 4$;
		
		\item the morphism $\Phi_{\vert -mK_X\vert}$ is birational onto its image for $m\geq 5$.
	\end{enumerate}
\end{cor}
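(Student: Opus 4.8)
The plan is to deduce the corollary from Theorem \ref{Free-threefolds} by cutting $X$ with a general anticanonical divisor. Since $X$ is weak Fano with Gorenstein canonical singularities, $A:=-K_X$ is a nef and big Cartier divisor, and by \cite[Proposition 4.1]{Kawamata2000} the system $\vert A\vert$ is nonempty, while \cite[Proposition 4.2]{Kawamata2000} supplies good general members. I would take a general $S\in\vert -K_X\vert$ and check that $(S,L)$, with $L:=A\vert_S$, is a quasi-polarized canonical Calabi-Yau threefold: adjunction gives $\cO_S(K_S)\cong\cO_X(K_X+S)\vert_S\cong\cO_S$; a general member has canonical singularities by a Bertini-type argument combined with \cite[Proposition 4.2]{Kawamata2000}; the vanishing $h^1(S,\cO_S)=0$ follows from the ideal sequence $0\to\cO_X(K_X)\to\cO_X\to\cO_S\to 0$ together with $H^1(X,\cO_X)=0$ and $H^2(X,\cO_X(K_X))\cong H^2(X,\cO_X)^{\vee}=0$, all instances of Kawamata-Viehweg vanishing for $K_X+A\cong\cO_X$ (Serre duality being used for the last); and $L$ is nef and big since it is nef with $L^3=A^3\cdot S=A^4>0$. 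Theorem \ref{Free-threefolds} then gives that $\vert mL\vert$ is basepoint free for $m\ge 4$, and \cite[Theorem 1.7]{Jiang2016} gives that $\Phi_{\vert mL\vert}$ is birational onto its image for $m\ge 5$.

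Next I would transfer these properties from $S$ to $X$ by restriction. Since $S\sim A$, twisting the ideal sequence of $S$ by $mA$ yields
\[0\to\cO_X((m-1)A)\to\cO_X(mA)\to\cO_S(mL)\to 0,\]
and as $(m-1)A$ is nef and big for $m\ge 1$, Kawamata-Viehweg vanishing gives $H^1(X,(m-1)A)=H^1(X,K_X+mA)=0$. Hence $H^0(X,mA)\to H^0(S,mL)$ is surjective, so the linear system cut out on $S$ by $\vert mA\vert$ is the complete system $\vert mL\vert$. In particular, for a good member $S$ and $m\ge 4$ we obtain $\bs\vert mA\vert\cap S=\emptyset$, and for $m\ge 5$ the morphism $\Phi_{\vert mA\vert}$ restricts on $S$ to $\Phi_{\vert mL\vert}$, which is birational onto its image.

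It remains to globalize these two statements over $X$, and this is where the real work lies. For basepoint freeness I would argue that the good members sweep out $X$: by Riemann-Roch together with the above vanishing one checks $h^0(X,-K_X)=\chi(X,-K_X)\ge 2$, so through a general point of $X$ passes a good member $S$, whence $\bs\vert mA\vert=\emptyset$ for $m\ge 4$; the points needing extra care are those lying on no good member, namely (a neighborhood of) $\bs\vert -K_X\vert$, which I would handle by choosing a good member through each such point or by a further restriction. For birationality with $m\ge 5$, I would separate two general points $x,y\in X$: using that $\vert -(m-1)K_X\vert$ is already basepoint free (the case $m-1\ge 4$ just established) I can pick $D\in\vert -(m-1)K_X\vert$ avoiding $y$ and a good member $S\ni x$; when $x,y$ lie on a common good member the birationality of $\Phi_{\vert mL\vert}$ separates them on $S$, and otherwise the divisor $S+D\in\vert mA\vert$ vanishes at $x$ but not at $y$. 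I expect the genuine obstacle to be exactly this globalization—controlling the base locus off a single anticanonical member and separating points lying on distinct members—which is the part modeled on \cite[Theorem 1]{OguisoPeternell1995} rather than a formal consequence of Theorem \ref{Free-threefolds}.
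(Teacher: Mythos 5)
Your overall strategy for part (1) -- restrict $-K_X$ to an anticanonical member which is a canonical Calabi--Yau threefold and invoke Theorem \ref{Free-threefolds} -- is the same as the paper's, but the step you single out as ``where the real work lies'' is actually where your argument has a genuine gap, and the gap has a one-line fix that you miss. Since $S\in\vert -K_X\vert$, the divisor $mS$ is a member of $\vert -mK_X\vert$, so $\bs\vert -mK_X\vert\subseteq\supp(S)$ automatically; combined with the surjectivity of $H^0(X,-mK_X)\rightarrow H^0(S,mL)$ and the basepoint freeness of $\vert mL\vert$ on $S$, this gives $\bs\vert -mK_X\vert=\bs\vert -mK_X\vert\cap S=\emptyset$ for a \emph{single} good member $S$. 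No sweeping is needed. Your proposed sweeping argument is not only unnecessary but problematic as written: you would need a member with canonical singularities through every (or every general) point, the inequality $h^0(X,-K_X)\geq 2$ does not follow ``by Riemann--Roch together with the above vanishing'' without computing the $c_2$-type terms, and members constrained to pass through prescribed points need not be general enough for Kawamata's existence theorem. Relatedly, the existence of a member of $\vert -K_X\vert$ with Gorenstein canonical singularities is \cite[Theorem 5.2]{Kawamata2000} (the fourfold statement), not \cite[Proposition 4.2]{Kawamata2000} (which is the threefold/SLC statement used inside the proof of Theorem \ref{Free-threefolds}); this cannot be replaced by ``a Bertini-type argument'' since $\vert -K_X\vert$ may have base points. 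The paper also first contracts to the anticanonical model $\mu\colon X\rightarrow Z$ via Lemma \ref{Terminal-model-and-canonical-model}~(2) and works on $Z$, transporting sections back by the isomorphism $\mu^*\colon H^0(Z,-mK_Z)\rightarrow H^0(X,-mK_X)$; your direct approach on $X$ would also work here since Theorem \ref{Free-threefolds} allows quasi-polarizations, but the existence theorem must still be applied correctly.

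For part (2) the paper does no work at all: it quotes \cite[Theorem 1.7]{Liu2017a}. Your attempted direct proof of birationality for $m\geq 5$ has the same unresolved issues (existence of a good member through a general point, and whether two general points lie on a common good member), so as written it does not close; if you want a self-contained argument you would need to carry out the separation argument of \cite[Theorem 1]{OguisoPeternell1995} carefully, but for the purposes of this corollary the citation suffices.
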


As above, the estimates in Corollary \ref{Weak-Fano-fourfolds} are both optimal as showed by a general weighted hypersurface of degree $10$ in the weighted projective space $\bbP(1,1,1,1,2,5)$. As the second application, in higher dimension, using the existence of good ladder on Fano manifolds with coindex four proved in \cite{Liu2017a} and the work of Fujita on polarized projective manifold with small $\Delta$-genus and sectional genus (cf. \cite{Fujita1988}), we derive the following theorem which can also be viewed as a generalization of \cite[Theorem 1.1]{Oguiso1991} in higher dimension.

\begin{thm}\label{4-th-map}
	Let $X$ be a $n$-dimensional Fano manifold such that $-K_X=(n-3)H$ for some ample divisor $H$. Then 
	\begin{enumerate}
		\item the complete linear system $\vert mH\vert$ is basepoint free when $m\geq 4$;
		
		\item the morphism $\Phi_{\vert mH\vert}$ is birational onto its image when $m\geq 5$.
	\end{enumerate}
    Moreover, if the morphism $\Phi_{\vert 4H\vert}$ is not birational onto its image, then one of the following holds.
    \begin{enumerate}
    	\item $X$ is a weighted hypersurface of degree $10$ in the weighted projective space $\bbP(1,\cdots,1,2,5)$. 
    	
    	\item $h^0(X,H)=n-2$.
    \end{enumerate}
\end{thm}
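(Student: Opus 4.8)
The plan is to reduce everything to the three-dimensional case settled in Theorem \ref{Free-threefolds} by descending along a \emph{good ladder}, and then to lift the conclusions back to $X$ by vanishing, using Fujita's classification only to pin down the exceptional configurations. Since $-K_X=(n-3)H$ the manifold $X$ has coindex four, so by \cite{Liu2017a} it carries a good ladder $X=X_0\supset X_1\supset\cdots\supset X_{n-3}=:S$ with each $X_{i+1}$ a general (normal) member of $\vert H\vert_{X_i}\vert$. Adjunction gives $K_{X_i}=(K_X+iH)\vert_{X_i}=-(n-3-i)H\vert_{X_i}$, so each $X_i$ is again a coindex-four Fano-type variety and, crucially, $K_S=\cO_S$. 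Together with $h^1(\cO_S)=0$ (inherited along the ladder from $h^1(\cO_X)=0$ via the restriction sequences) and the canonical singularities guaranteed by the good ladder, the pair $(S,L)$ with $L:=H\vert_S$ is a quasi-polarized canonical Calabi-Yau threefold, with $L^3=H^n$ and sectional genus $g(X,H)=H^n+1$. Theorem \ref{Free-threefolds} then applies to $(S,L)$.

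To lift base-point freeness I would first record that Kawamata-Viehweg vanishing yields $H^1(X_i,(m-1)H\vert_{X_i})=0$ at every step of the ladder, because there $(m-1)H-K_{X_i}$ is ample; hence the restriction maps $H^0(X,mH)\to H^0(S,mL)$ are surjective, and likewise $h^0(X,H)=h^0(S,L)+(n-3)$. For $m\geq 4$, Theorem \ref{Free-threefolds} gives $\bs\vert mL\vert=\emptyset$, so surjectivity forces $\bs\vert mH\vert\cap S=\emptyset$. On the other hand $\bs\vert mH\vert\subseteq\bs\vert H\vert\subseteq S$, the first inclusion because $s\mapsto s^m$ embeds $\vert H\vert$ into $\vert mH\vert$ and the second because $S$ is an intersection of members of $\vert H\vert$. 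Combining the two identities gives $\bs\vert mH\vert=\emptyset$, which proves (1).

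For birationality, observe that $\vert mH\vert$ base-point free together with $H$ ample makes $\Phi_{\vert mH\vert}$ a finite morphism, since no curve $C$ is contracted ($mH\cdot C>0$); thus birationality onto the image is equivalent to generic injectivity. As the restriction $H^0(X,mH)\to H^0(S,mL)$ is onto, $\Phi_{\vert mH\vert}\vert_S$ is exactly $\Phi_{\vert mL\vert}$, which for $m\geq 5$ is birational by \cite{Jiang2016}. To pass from $S$ to $X$ I would separate two general points $x\neq y$ with $\Phi_{\vert mH\vert}(x)=\Phi_{\vert mH\vert}(y)$ by exhibiting a ladder threefold through both and invoking birationality of $\Phi_{\vert mL\vert}$ there; the space of sections of $\vert H\vert$ vanishing at $x$ and $y$ has codimension at most two, so this is feasible precisely when $h^0(S,L)\geq 2$, i.e. $h^0(X,H)\geq n-1$, leaving enough sections to cut out such a threefold. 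When $h^0(S,L)=1$ this point-separation fails, and one must instead invoke Fujita's structure theory directly. This yields (2).

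Finally, the ``moreover'' part I would argue by contraposition: if $\Phi_{\vert 4H\vert}$ is not birational then by the above correspondence $\Phi_{\vert 4L\vert}$ is not birational, whence Theorem \ref{Free-threefolds} gives $L^3=1$ or $h^0(S,L)=1$. The case $h^0(S,L)=1$ translates, through $h^0(X,H)=h^0(S,L)+(n-3)$, into case (2), $h^0(X,H)=n-2$. In the remaining case $L^3=H^n=1$ both the $\Delta$-genus and the sectional genus $g(X,H)=H^n+1=2$ are small, and here I would use Fujita's classification of polarized manifolds with small $\Delta$-genus and sectional genus (\cite{Fujita1988}) to identify $X$ with the weighted hypersurface of degree $10$ in $\bbP(1,\dots,1,2,5)$, matched against the known structure of canonical Calabi-Yau threefolds with $L^3=1$. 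I expect the two real obstacles to be exactly the ascent of (non-)birationality through the ladder when $h^0(X,H)$ is small, and the reconstruction of the global weighted-hypersurface structure of $X$ from that of the slice $S$ in the degree-one case; both are where Fujita's structural results must carry the argument, and where one must rule out the competing numerical possibilities.
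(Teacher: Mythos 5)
Your overall strategy coincides with the paper's: descend along the good ladder of \cite{Liu2017a} to a polarized canonical Calabi--Yau threefold $(X_3,H\vert_{X_3})$, lift basepoint freeness through the surjective restriction maps provided by Kawamata--Viehweg vanishing, and feed $H^n=1$ into Fujita's classification. Part (1) is fine. The first genuine gap is in the ascent of birationality. Your device for separating two general points $x\neq y$ --- cutting a ladder threefold through both --- requires at least three sections of $H\vert_{X_4}$ at the final step, i.e.\ $h^0(X,H)\geq n-1$, and you concede it fails when $h^0(X,H)=n-2$; but statement (2) asserts birationality of $\Phi_{\vert mH\vert}$ for $m\geq 5$ \emph{unconditionally}, and ``invoke Fujita's structure theory directly'' is not an argument there. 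The tool the paper uses is \cite[Lemma 1.3]{OguisoPeternell1995} (the same lemma driving the pencil case inside the proof of Theorem \ref{Free-threefolds}): birationality ascends from a \emph{general} member of a free pencil, because $\vert mH\vert\supseteq \vert H\vert+\vert (m-1)H\vert$ separates distinct members while $\Phi_{\vert mH\vert_{X_3}\vert}=\Phi_{\vert mL\vert}$ is birational on each general member by \cite{Jiang2016}. Since $h^0(X_4,H\vert_{X_4})=h^0(X,H)-(n-4)\geq 2$ always, this covers every case. Note also that even when enough sections exist, a member through two prescribed points is only a general member of a proper linear subsystem, so the good-ladder properties (irreducibility, canonical singularities) you rely on would have to be re-established for it; working with general members of the full system avoids this.

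The second gap is in the endgame. From $H^n=1$, $g(X,H)=2$ and $h^0(X,H)\geq n-1$ you only get $\Delta(X,H)\in\{1,2\}$, and Fujita's results (\cite[Propositions 2.3 and 2.4]{Fujita1988}) then produce \emph{two} candidates: the degree-$10$ weighted hypersurface in $\bbP(1,\dots,1,2,5)$ for $\Delta=1$, and a weighted complete intersection of type $(6,6)$ in $\bbP(1,\dots,1,2,2,3,3)$ for $\Delta=2$, which also has $H^n=1$ and $h^0(X,H)=n-1$. The second candidate cannot be dismissed on numerical grounds; one must show directly that on it $\Phi_{\vert 4H\vert}$ \emph{is} birational, contradicting the hypothesis. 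The paper does this by exhibiting explicit monomials in $H^0(X,mH)$ for $m\geq 3$ (products of each weighted coordinate with a suitable power of a weight-one coordinate $x_0$) that already separate points on the open set $\{x_0\neq 0\}\cap X$. You acknowledge that ``competing possibilities'' must be excluded but supply no mechanism, so this exclusion step is missing from your argument.
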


As in dimension $4$, the same example given in Theorem \ref{4-th-map} guarantees that the estimates given in Theorem \ref{4-th-map} are best possible, and we have always $h^0(X,H)\geq n-2$ in Theorem \ref{4-th-map} (cf. \cite[Theorem 1.2]{Liu2017a}). On the other hand, if $X$ is a general weighted complete intersection of type $(6,6)$ in the weighted projective space $\bbP(1,\cdots,1,2,2,3,3)$ and $H\in\vert\cO_X(1)\vert$, then we have $h^0(X,H)=n-1$. This leads us to ask the following natural question.

\begin{question}\cite[2.14]{Fujita1988}\cite[Problems 2.4]{Kuechle1997}
	Is there an example of Fano $n$-fold $X$ such that $-K_X=(n-3)H$ for some ample divisor $H$ and $h^0(X,H)=n-2$?
\end{question}

\subsection*{Acknowledgements} I want to thank Andreas H\"oring and Christophe Mourougane for their constant encouragements and supports.

\section{Proof of the main results}

Throughout the present paper, we work over the complex numbers and we adopt the standard notation in Koll\'ar-Mori \cite{KollarMori1998}, and will freely use them. We start by selecting some results in minimal model program, and we shall use them in the sequel.

\begin{lemma}\label{Terminal-model-and-canonical-model}
	Let $(X,L)$ be a quasi-polarized projective variety with at most Gorenstein canonical singularities. 
	\begin{enumerate}
		\item There exists a projective variety $Y$ with only $\bbQ$-factorial terminal singularities and a proper surjective birational morphism $\nu\colon Y\rightarrow X$ such that $K_Y=\nu^*K_X$. Moreover, in this case, $M\colon=\nu^*L$ gives a quasi-polarization on $Y$.
		
		\item Assume moreover that $aL-K_X$ is nef and big for some positive integer $a$. Then $\vert mL\vert$ is basepoint free for any large $m$ and gives a proper surjective birational morphism $\mu\colon X\rightarrow Z$ such that $L=\mu^*H$ for some ample line bundle $H$ on $Z$.  
	\end{enumerate}
\end{lemma}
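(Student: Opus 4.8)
The plan is to deduce both parts from standard results of the minimal model program, the crucial point being that a Gorenstein canonical singularity is in particular klt, so that the relevant finite-generation and vanishing theorems apply to the pair $(X,0)$.

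For part (1), I would invoke the existence of a $\bbQ$-factorial terminalization of $X$, which is classical in dimension three and available in general from the minimal model program. Concretely, I would start from a log resolution $\morp{f}{W}{X}$ and write $K_W = f^*K_X + \sum_i a_i E_i$ with all $a_i \geq 0$ because $X$ is canonical. Running a relative $K_W$-minimal model program over $X$ (which preserves terminality since $W$ is smooth) produces a $\bbQ$-factorial terminal variety $Y$ together with a projective birational morphism $\morp{\nu}{Y}{X}$ for which $K_Y$ is nef over $X$. Crepancy, namely $K_Y = \nu^*K_X$, would then follow from the negativity lemma: the divisor $K_Y - \nu^*K_X$ is effective (discrepancies over a canonical variety are nonnegative), $\nu$-exceptional, and nef over $X$, hence it must vanish. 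For the second assertion in (1) I would set $M := \nu^*L$ and note that $M$ is nef as the pullback of a nef divisor and big since $M^{\dim X} = L^{\dim X} > 0$ by the projection formula; thus $(Y,M)$ is again quasi-polarized.

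For part (2), I would apply the Kawamata--Shokurov basepoint-free theorem \cite{KollarMori1998} to the nef Cartier divisor $L$ on the klt variety $X$: the hypothesis that $aL - K_X$ is nef and big is exactly what the theorem requires, and it yields that $\vert mL\vert$ is basepoint free for all sufficiently large $m$, i.e. $L$ is semiample. Passing to the associated semiample fibration then furnishes a normal projective variety $Z$, a morphism $\morp{\mu}{X}{Z}$ with connected fibers, and an ample line bundle $H$ on $Z$ with $L = \mu^*H$. Finally, since $L$ is big we have $\dim Z = \dim X$, so the fibration $\mu$ is generically finite with connected fibers and therefore birational, giving the asserted factorization with $\mu$ a proper birational morphism and $H$ ample.

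The expected main obstacle is entirely in part (1): it rests on the existence of $\bbQ$-factorial terminal models, a nontrivial input of the minimal model program, so the work lies in citing this correctly and in verifying via the negativity lemma that the model one obtains is genuinely crepant rather than merely relatively minimal. Part (2), by contrast, is a direct application of two well-established theorems once one observes that the nef-and-big hypothesis on $aL - K_X$ matches their assumptions precisely.
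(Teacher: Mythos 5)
Your proposal is correct and follows essentially the same route as the paper: part (1) is the existence of a $\bbQ$-factorial terminal model (the paper simply cites \cite[Corollary 1.4.4]{BirkarCasciniHaconMcKernan2010}, whose proof is the log-resolution/relative-MMP/negativity-lemma argument you sketch), and part (2) is the basepoint-free theorem followed by the semiample contraction, where the paper obtains the integral line bundle $H$ explicitly as $H_2-H_1$ from the Stein factorization — a small detail your appeal to the semiample fibration glosses over but which is standard.
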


\begin{proof}
	The assertion $(1)$ is a consequence of \cite[Corollary 1.4.4]{BirkarCasciniHaconMcKernan2010}, and $Y$ is called a terminal modification of $X$. The statement $(2)$ is an easy corollary of the Basepoint-free theorem. In fact, applying Basepoint-free theorem (cf. \cite[Theorem 3.3]{KollarMori1998}), $\vert mL\vert$ is basepoint free for all large $m$ and we define $\varphi\colon X\rightarrow Z$ to be the Stein factorization of the morphism $\Phi_{\vert mH\vert}$. Clearly $\varphi$ is independent of the choice of $m$. In particular, there exists two ample line bundles $H_1$ and $H_2$ on $Z$ such that $mL=\varphi^*H_1$ and $(m+1)L=\varphi^*H_2$. Set $H=H_2-H_1$. It follows that $L=\mu^*H$.
\end{proof}

\begin{defn}
	Let $X$ be a reduced equi-dimensional algebraic scheme and $B$ an effective $\bbR$-divisor on $X$. The pair $(X,B)$ is said to be SLC (semi-log canonical) if the following conditions are satisfied.
	\begin{enumerate}
		\item $X$ satisfies the Serre condition $S_2$, and has only normal crossing singularities in codimension one.
		
		\item The singular locus of $X$ does not contain any irreducible component of $B$.
		
		\item $K_X+B$ is an $\bbR$-Cartier divisor.
		
		\item For any birational morphism $\mu\colon Y\rightarrow X$ from a normal variety, if we write $K_Y+B_Y=\mu^*(K_X+B)$, then all the coefficients of $B_Y$ are at most $1$.
	\end{enumerate}
    Moreover, $(X,B)$ is called a stable log pair if in addition 
    \begin{enumerate}
    	\item[(5)] $K_X+B$ is ample.
    \end{enumerate}
    A stable variety is a log stable pair $(X,B)$ with $B=0$, and we will abbreviate it as $X$.
\end{defn}

\begin{defn}
	Let $(X,L)$ be a $n$-dimensional quasi-polarized projective manifold.
	\begin{enumerate}
		\item The $\Delta$-genus $\Delta(X,L)$ of $(X,L)$ is defined to be $n+L^n-h^0(X,L)$.
		
		\item The sectional genus $g(X,L)$ of $(X,L)$ is defined to be $\left(K_X\cdot L^{n-1}+(n-1)L^n\right)/2+1$.
	\end{enumerate}
\end{defn}

Now we give the proof of Theorem \ref{Free-threefolds}.

\begin{proof}[Proof of Theorem \ref{Free-threefolds}]
	Recall that canonical singularities are normal rational Cohen-Macaulay singularities. By Lemma \ref{Terminal-model-and-canonical-model} (2), there exists a proper surjective birational morphism $\mu\colon X\rightarrow Z$ such that $L=\mu^*H$ for some ample line bundle $H$ on $Z$. Moreover, as $\mu_*K_X=K_Z$, we have $\cO(K_Z)=\cO_X$. In particular, we get $\mu^*K_Z=K_X$. It follows that $Z$ has only canonical singularities. Thus, $Z$ has only rational singularities and $R^i\mu_*\cO_X=0$ for $i>0$. This implies $h^1(Z,\cO_Z)\cong h^1(X,\cO_X)=0$. As a consequence, $(Z,H)$ is a polarized canonical Calabi-Yau threefold. On the other hand, using the projection formula, we get $\mu_*\cO_X(mL)=\cO_Z(mH)$ and $R^i\mu_*\cO_{X}(mL)=0$ for $i>0$. This implies that the induced morphism $\mu^*\colon H^0(Z,mH)\rightarrow H^0(X,mL)$ is an isomorphism for all $m$. In particular, $\vert mL\vert$ is basepoint free if and only if $\vert mH\vert$ is basepoint free and $\Phi_{\vert mL\vert}$ is birational onto its image if and only if $\Phi_{\vert mH\vert}$ is birational noto its image. According to \cite[Proposition 4.2]{Kawamata2000}, there exists an member $S\in\vert H\vert$ such that $S$ is a stable surface with $K_S=H\vert_S$. By Kawamata-Viehweg vanishing theorem and our assumption, the natural restriction 
	\[H^0(Z,mH)\longrightarrow H^0(S,mH\vert_S)\]
	is surjective for all $m\in\bbZ$. Thanks to \cite[Theorem 24]{LiuRollenske2014}, $\vert mK_S\vert$ is basepoint free for all $m\geq 4$. Consequently, $\vert mH\vert$ is also basepoint free for all $m\geq 4$.
	
	Next we consider the case when $\Phi_{\vert 4L\vert}$ is not birational onto its image. By Lemma \ref{Terminal-model-and-canonical-model} (1), there exists a terminal modification $\nu\colon Y\rightarrow X$ such that $(Y,M)$ is a quasi-polarized minimal Calabi-Yau threefold where $M=\nu^*L$. As above, we see that $L^3=M^3$ and the induced morphism $\nu^*\colon H^0(X,mL)\rightarrow H^0(Y,mM)$ is an isomorphism for all $m$. In particular, $\Phi_{\vert mL\vert}$ is birational onto its image if and only if $\Phi_{\vert mM\vert}$ is birational onto its image. Thus, after replacing $(X,L)$ by $(Y,M)$, we may assume that $(X,L)$ itself is a quasi-polarized minimal Calabi-Yau threefold. In particular, $X$ is actually factorial by \cite[Lemma 5.1]{Kawamata1988}. As mentioned in the introduction, we have always $h^0(X,L)\geq 1$ by \cite[Proposition 4.1]{Kawamata2000}. To prove Theorem \ref{Free-threefolds}, we assume that $h^0(X,L)\geq 2$ and we distinguish two cases according to whether $\dim\Phi_{\vert L\vert}(X)=1$.
	
	\textit{1st case. $\dim\Phi_{\vert L\vert}(X)\geq 2$.} By Hironaka's resolution theorem, there exists a smooth projective threefold $Y$ and a proper surjective birational morphism $\pi\colon Y\rightarrow X$ and a decomposition 
	\[\vert\pi^*L\vert=\vert F\vert+B \]
	such that $\vert F\vert$ is basepoint free. Let $T\in\vert F\vert$ be a general smooth member. By the proof of \cite[Theorem 1]{OguisoPeternell1995}, $\Phi_{\vert (m+1)L\vert}$ is birational onto its image if $\Phi_{\vert \pi^*mL\vert_T+K_T\vert}$ is birational onto its image. Thus, if $(\pi^*L\vert_T)^2\geq 2$, by \cite[Theorem 1 (ii)]{Reider1988}, the complete linear system $\vert \pi^*mL\vert_T+K_T\vert$ is birational onto its image if $m\geq 3$. If $(\pi^*L\vert_T)^2=1$, by the projection formula, we get $L^2\cdot\pi_*T=1$ since $T$ is a general member in the movable family $\vert F\vert$. Thanks to \cite[Lemma 1.1 (4)]{OguisoPeternell1995}, we see that $L^3=1$.
	
	\textit{2nd case. $\dim\Phi_{\vert L\vert}(X)=1$.} Since $h^1(X,\cO_X)=1$, there exists a smooth projective threefold $Y$ and a proper surjective birational morphism $\mu\colon Y\rightarrow X$ and a decomposition 
	\[\vert\mu^*L\vert=n\vert F\vert+B\]
	such that $\vert F\vert$ is a free pencil. Let $T$ be a general smooth element in $\vert F\vert$. Then $\Phi_{\vert (m+1)L\vert}$ is birational onto its image if $\Phi_{\vert \pi^*mL\vert_T+K_T\vert}$ is birational onto its image. Using the same argument as in the 1st case, we obtain $L^3=1$ if $\Phi_{\vert 4L\vert}$ is not birational onto its image.
\end{proof}

Corollary \ref{Weak-Fano-fourfolds} is an immediate consequence of Theorem \ref{Free-threefolds} and the existence of good divisor on weak Fano fourfolds established in \cite[Theorem 5.2]{Kawamata2000}.

\begin{proof}[Proof of Corollary \ref{Weak-Fano-fourfolds}]
	The statement (2) was proved in \cite[Theorem 1.7]{Liu2017a}. By Lemma \ref{Terminal-model-and-canonical-model} (2), there exists a surjective proper birational map $\mu\colon X\rightarrow Z$ and an ample line bundle $H$ on $Z$ such that $\mu^*H=-K_X$. Moreover, as $\mu_*K_X=K_Z$, it follows that $-K_Z=H$ and $\mu^*K_Z=K_X$. According to \cite[Theorem 5.2]{Kawamata2000}, there exists a member $Y\in \vert -K_Z\vert$ such that $Y$ has only Gorenstein canonical singularities. As a consequence, $(Y,-K_Z\vert_Y)$ is a polarized canonical Calabi-Yau threefold. Thanks to Kawamata-Viehweg vanishing theorem, the natural restriction map
	\[H^0(Z,-mK_Z)\longrightarrow H^0(Y,-mK_Z\vert_Y)\]
	is surjective for all $m\in\bbZ$. Then, by Theorem \ref{4-th-map}, we see that $\vert -mK_Z\vert$ is basepoint free if $m\geq 4$. On the other hand, the same argument as in Theorem \ref{Free-threefolds} shows that the induced morphism $\mu^*\colon H^0(Z,-mK_Z)\rightarrow H^0(X,-mK_X)$ is an isomorphism for all $m$. Hence, $\vert-mK_X\vert$ is basepoint free for all $m\geq 4$.
\end{proof}

Next we give the proof of Theorem \ref{4-th-map}.

\begin{proof}[Proof of Theorem \ref{4-th-map}]
	By \cite[Theorem 1.2]{Liu2017a} and \cite[Theorem 1.1]{Floris2013}, there exists a descending sequence of subvarieties of $X$
	\[X=X_n\supsetneq X_{n-1}\supsetneq\cdots \supsetneq X_{3}\]
	such that $X_{i+1}\in \vert H\vert_{X_i}\vert$ and $X_i$ has only Gorenstein canonical singularities. Moreover, it is easy to see that $(X_3,H\vert_{X_3})$ is a polarized canonical Calabi-Yau threefold. Thanks to Theorem \ref{Free-threefolds}, $\vert mH\vert_{X_{n-3}}\vert$ is basepoint free if $m\geq 4$. By Kawamata-Viehweg vanishing theorem, it is easy to see that the natural restriction
	\[H^0(X,mH)\longrightarrow H^0(X_{3},mH\vert_{X_{3}})\]
	is surjective for all $m\in\bbZ$. Thus $\vert mH\vert$ is basepoint free if $m\geq 4$. On the other hand, if $\Phi_{\vert 4H\vert}$ is not birational onto its image, since we can choose all $X_i$ to be general, $\Phi_{\vert 4H\vert_{X_{3}}\vert}$ is not birational onto its image (cf. \cite[Lemma 1.3]{OguisoPeternell1995}). If $h^0(X,H)\not=n-2$, by \cite[Theorem 1.2]{Liu2017a}, we get $h^0(X,H)\geq n-1$. As a consequence, we obtain
	\[h^0(X_3,H\vert_{X_3})=h^0(X,H)-(n-3)\geq 2.\]
    Then Proposition \ref{Free-threefolds} implies $H^n=(H\vert_{X_3})^3=1$. Then, by definition, we have
	\[g(X,L)\colon=(K_X\cdot H^{n-1}+(n-1)H^n)/2+1=H^n+1=2,\]
	and
	\[\Delta(X,H)\colon=H^n+n-h^0(X,H)\leq n+1-(n-1)=2.\]
	On the other hand, it is well-known that we have $\Delta(X,H)\geq 0$ with equality if and only if $g(X,L)=0$ (cf. \cite[Theorem 12.1]{Fujita1990}). This implies that $\Delta(X,H)=1$ or $2$ in our situation. According to \cite[Proposition 2.3 and 2.4]{Fujita1988}, $X$ is isomorphic to either a weighted hypersurface of degree $10$ in the weighted projective space $\bbP(1,\cdots,1,2,5)$ or a weighted complete intersection of type $(6,6)$ in the weighted projective space $\bbP(1,\cdots,1,2,2,3,3)$. 
	
	However, if $X$ is a weighted complete intersection of type $(6,6)$ in the weighted projective space $\bbP(1,\cdots,1,2,2,3,3)$, then the group $H^0(X,mH)$ $(m\geq 3)$ contains the monomials 
	\[\{\,x_1 x_0^{m-1}, \cdots, x_{n-2} x_0^{m-1}, x_{n-1} x_0^{m-2}, x_{n} x_0^{m-2}, x_{n+1} x_0^{m-3}, x_{n+2} x_0^{m-3}\,\},\] where $x_i$ are the weighted homogeneous coordinates of $\bbP(1,\cdots,1,2,2,3,3)$ in order. This shows that $\Phi_{\vert mH\vert}$ $(m\geq 3)$ is one-to-one on the non-empty Zariski open subset $\{x_0\not=0\}\cap X$ and this case is excluded.
\end{proof}

\section{Further discussions}

Let $(X,L)$ be a quasi-polarized canonical Calabi-Yau threefold such that $h^0(X,L)=1$. Let $(Y,M)$ be the terminal modification of $(X,L)$. Then $Y$ is smooth in codimensioin two. By Riemann-Roch formula and the projection formula, we obtain
\[\chi(X,mL)=\chi(Y,mM)=\frac{M^3}{6}m^3+\frac{M\cdot c_2(Y)}{12}+\chi(Y,\cO_Y).\]
As $h^1(X,\cO_Y)=0$, by Serre duality, we get $\chi(Y,\cO_Y)=0$. Thus, using Kawamata-Viehweg vanishing theorem, we obtain
\[1=h^0(X,L)=h^0(Y,M)=\frac{1}{6}M^3+\frac{1}{12}M\cdot c_2(Y).\]
Moreover, thanks to \cite[Thereom 0.5]{Ou2017}, we have $M\cdot c_2(X)\geq 0$. It follows that 
\[1\leq L^3= M^3\leq 6.\] 
On the other hand, a smooth ample divisor $S$ on a Calabi-Yau threefold $X$ (not necessarily simply connected) is a minimal surface of general type. This simple observation yields a bridge between two important classes of algebraic varieties. Moreover, a smooth ample divisor $S$ on a Calabi-Yau threefold is called a \emph{rigid ample surface} if $h^0(X,\cO_X(S))=1$. In this case, the geometric genus $p_g(S)\colon= h^0(S,K_S)$ is zero and, by the Lefschetz theorem, the natural map $\pi_1(S)\rightarrow\pi_1(X)$ is an isomorphism. Thus, according to theorem \ref{Free-threefolds}, it may be interesting to ask the following question.

\begin{question}
	Is there a simply connected smooth Calabi-Yau threefold $X$ containing a rigid ample surface $S$?
\end{question}

We remark that if we do not require the simple connectedness of $X$, such an example of $(X,S)$ with  the quaternion group of order $8$ $\pi_1(X)=H_8$ as its fundamental group was constructed by Beauville in \cite{Beauville1999}.

\def\cprime{$'$}

\renewcommand\refname{Reference}
\bibliographystyle{abbrv}
\bibliography{notbirational}

\end{document}